\documentclass{article}
\usepackage[utf8]{inputenc}
\usepackage{algorithm2e}
\usepackage{mathtools}
\usepackage{amsthm}
\usepackage{cite}
\usepackage{comment}
\usepackage{authblk}
\usepackage{todonotes}
\usepackage{hyperref}
\usepackage[toc,page]{appendix}
\usepackage[dvipsnames]{xcolor}
\usepackage{amssymb}
\usepackage{newpxtext}

\newtheorem{lemma}{Lemma}
\newtheorem{theorem}{Theorem}
\newtheorem{proposition}{Proposition}
\newtheorem{corollary}{Corollary}

\DeclarePairedDelimiter\floor{\lfloor}{\rfloor}

\RequirePackage{fix-cm}
\usepackage{comment}
\usepackage{amssymb}
\usepackage{hyperref}
\usepackage{longtable}
\usepackage{fontawesome5}

\begin{document}
\title{New bounds on the Graham-Pollak theorem for hypergraphs}
\author{Anand Babu\thanks{Department of Computer Science {\&} Engineering,
National Institute of Technology Calicut, Kozhikode, India.
Corresponding author:
\href{mailto:anandbabu@nitc.ac.in}{\texttt{anandbabu@nitc.ac.in}}.
ORCID:
\href{https://orcid.org/0000-0002-9904-0488}{0000-0002-9904-0488}}}
\date{}
\maketitle
\begin{abstract}
For a fixed $r$, let $f_r(n)$ denote the minimum number of complete $r$-partite $r$-uniform hypergraphs whose edge sets partition the complete $r$-uniform hypergraph on $n$ vertices. The Graham-Pollak theorem asserts that $f_2(n)=n-1$. Let $c_r$ be the smallest constant such that $f_r(n)\le c_r(1+o(1))\binom{n}{\lfloor r/2\rfloor}$. We give two constructions that improve upper bounds for $f_r(n)$. The first partitions $E(K_4)\times E(K_{17})$ into $44$ products of edge sets of complete bipartite graphs and yields $c_4\le11/12$, improving the previous bound of $c_4\le14/15$. The second gives an asymptotically improved exact cover for odd $r$. Combining these constructions, we prove that $c_r<1$ for every odd integer $r\ge65$, which improves upon the previous computer-assisted result of $113$. We further obtain the improved asymptotic bound
$c_r\le \frac{r}{15}\left(\frac{11}{12}\right)^{r/4}+o(1)$ for general $r$. These results narrow the gap towards resolving the major open problem of determining whether $c_5<1$.
\end{abstract}

\medskip
\noindent\textbf{Keywords.}
Graham-Pollak theorem; hypergraph; decomposition;\newline
biclique partition.

\medskip
\noindent\textbf{Mathematics Subject Classification (2020).}
Primary 05C65; Secondary 05C70, 05C35.

\section{Introduction}
The \emph{biclique partition number} $bp(G)$ of a graph $G$ is the minimum number of complete bipartite graphs (bicliques) whose edge sets partition $E(G)$. An $r$-uniform hypergraph $H$ (or simply an $r$-graph) is \emph{$r$-partite} if its vertex set admits a partition $V(H)=V_1\cup \cdots \cup V_r$ such that every edge contains exactly one vertex from each part. Equivalently, $E(H)\subseteq V_1\times\cdots\times V_r$. The celebrated Graham-Pollak theorem~\cite{graham1971addressing, graham1972embedding} states that $bp(K_n) = n-1$. The original proof by Graham and Pollak uses Sylvester's law of inertia \cite{graham1972embedding}. Other proofs of the same were found by Tverberg~\cite{tverberg1982decomposition}, Peck~\cite{peck1984new} and Vishwanathan~\cite{vishwanathan2008polynomial} using linear algebraic methods. Vishwanathan later gave a combinatorial proof~\cite{vishwanathan2013counting}.

Aharoni and Linial \cite{alon1986decomposition} posed the natural extension of this problem to determine the minimum size of the family of complete $r$-partite $r$-graphs whose edge sets partition the edge set of the complete $r$-uniform hypergraph, for $r>2$. We denote by $f_r(n)$ the minimum size of such a family that partitions the edge set of the complete $r$-uniform hypergraph on $n$ vertices, $K_n^r$. For $r=2$, $f_r(n)$ denotes the biclique partition number of $K_n$.

For $r=3$, Alon~\cite{alon1986decomposition} showed that $f_3(n)=n-2$. For general $r$, Alon~\cite{alon1986decomposition} established an upper bound that was later improved by Cioab\u{a}, K\"undgen, and Verstra\"ete~\cite{cioabua2009decompositions}. Alon~\cite{alon1986decomposition} also established the corresponding linear-algebraic lower bound. Thus,
${\frac{2}{\binom{2\lfloor r/2 \rfloor}{\lfloor r/2 \rfloor}}(1+o(1))\binom{n}{\lfloor r/2 \rfloor}
\le f_r(n)
\le (1+o(1))\binom{n}{\lfloor r/2 \rfloor}}$. Later Leader, Mili{\'c}evi{\'c}, and Tan \cite{leader2017decomposing} proved that ${f_4(n)\le \left(\frac{14}{15}\right)(1+o(1))\binom{n}{2}}$ and as an immediate consequence, they obtained $f_r(n)\le
\left(\frac{14}{15}\right)(1+o(1))
\binom{n}{r/2}$
for even $r$. Let $c_r$ denote the smallest constant $c$ such that
$f_r(n)\le
c(1+o(1))
\binom{n}{\lfloor r/2\rfloor}$. Leader and Tan \cite{leader2018improved} subsequently proved that, for every $r\ge4$, $c_r\le
\frac{r}{2}
\left(\frac{14}{15}\right)^{r/4}
+o(1)$,
which implies that $c_r<1$ for all odd $r\ge295$. Babu and Vishwanathan \cite{babu2019bounds} improved this to $113$. Since Alon's result gives $c_3=1$, $c_5$ is the first unresolved case. In this note, we prove $c_r<1$ for every odd $r\ge65$, improving the previous bound of $r\ge113$. This narrows the gap towards the major open problem of determining whether $c_5<1$. We also improve the exponential factor in the upper bound for $f_r(n)$. We show that for sufficiently large fixed value of $r$,
\[f_r(n)
\le
\left[
\left(\frac{12}{11}\right)^{3/4}
\frac{e\ln(12/11)}{4} \cdot
r
\left(\frac{11}{12}\right)^{r/4}
+o(1)
\right]
\binom{n}{\lfloor r/2 \rfloor}.
\]

Several related problems studied in the literature include multicoverings of complete graphs and hypergraphs~\cite{babu2021multicovering}, decompositions of products of complete graphs and hypergraphs~\cite{leader2017decomposing,leader2018improved}, variants of biclique partitions~\cite{ShigetaAmano2015}, odd covers~\cite{babai1988linear,radhakrishnan2000depth,buchanan2023odd,buchanan2024odd,leader2026odd}, and more general list-covering problems~\cite{cioabua2013variations,babu2022improved,alon2023new,grytczuk2026neighborly,cheng2024exact}.

\section{The Main Result}

\subsection{Decomposition of $E(K_4)\times E(K_n)$}

For an $r$-uniform hypergraph $H$, let $f_r(H)$ be the minimum number of complete $r$-partite $r$-uniform hypergraphs needed to partition the edge set of $H$. So $f_r(K_n^{r})=f_r(n)$. A \emph{decomposition} of an $r$-uniform hypergraph $H$ is a partition of the edge set of $H$ into $f_r(H)$ complete $r$-partite $r$-graphs. For graphs $G$ and $H$, let $g(G,H)$ be the minimum number of sets
of the form $E(K_{a,b})\times E(K_{c,d})$ whose disjoint union is $E(G)\times E(H)$. We alternatively represent $g(K_n, K_n)$ with $g(n)$. Taking products of the edge sets of
bicliques in the decompositions of the two complete graphs provided by Graham-Pollak yields $g(n)\le(n-1)^2$. Leader, Mili{\'c}evi{\'c}, and Tan~\cite{leader2017decomposing}
improved this bound to $g(n)\le(14/15+o(1))n^2$.
Lemma~\ref{lem:k4-k17} below gives a finite construction that,
through Proposition~\ref{prop:finite-transfer}, improves this
coefficient further to $11/12$.

We use the following construction from
\cite[Section~4]{leader2017decomposing}. Let $G_0,G_1,G_2,G_3$
be graphs on the same $n$-vertex set whose edge sets partition
$E(K_n)$. Let $M_1,M_2,M_3$ be the three perfect matchings of
$K_4$, and let $C_i=K_4-M_i$, where each $C_i$ is a $4$-cycle. The sets $E(C_1) \times E(G_1)$, $E(C_2) \times E(G_2)$, $E(C_3) \times E(G_3)$, $E(K_4-C_1) \times E(G_0 \cup G_1)$, $E(K_4-C_2) \times E(G_0 \cup G_2)$, $E(K_4-C_3) \times E(G_0 \cup G_3)$ form a partition of $E(K_4) \times E(K_n)$. Therefore,

\begin{equation}\label{eq:four-pattern}
g(K_4,K_n)\le
\sum_{i=1}^{3}\bigl(f_2(G_i)+2f_2(G_0\cup G_i)\bigr).
\end{equation}

\begin{lemma}\label{lem:k4-k17}
    The set $E(K_4) \times E(K_{17})$ can be decomposed into  $44$ sets of the form $E(K_{a,b}) \times E(K_{c,d})$.
\end{lemma}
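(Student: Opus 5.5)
We apply inequality~\eqref{eq:four-pattern} with $n=17$. It then suffices to exhibit four graphs $G_0,G_1,G_2,G_3$ on a common $17$-vertex set whose edge sets partition $E(K_{17})$ and for which
\[
\sum_{i=1}^{3}\bigl(f_2(G_i)+2f_2(G_0\cup G_i)\bigr)\le 44 .
\]
Only upper bounds on $f_2$ are needed. So for each of the six graphs $G_i$ and $G_0\cup G_i$ it is enough to list explicit bicliques partitioning its edges and count them. No lower-bound argument is required.

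\textbf{Calibration.} Graham--Pollak gives $f_2(K_{17})=16$. The naive choice $G_0=K_{17}$, $G_i=\emptyset$ costs $3\cdot 2\cdot 16=96$, so the partition has to do real work. The unions $G_0\cup G_i$ carry weight $2$, so they should be ``nearly complete multipartite'' graphs, which have small biclique partition number. Since $\sum_i 2f_2(G_0\cup G_i)\le 44$, we need roughly $\sum_i f_2(G_0\cup G_i)\approx 18$ and $\sum_i f_2(G_i)\approx 8$. For instance, $f_2(G_0\cup G_i)\approx 6$ and $f_2(G_i)\approx 2$--$3$ for each $i$, with the three indices not necessarily symmetric because $44$ is not divisible by $3$.

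\textbf{Candidate structure.} Write $V=\{x\}\cup\mathbb{F}_2^4$. Colour each edge $uv$ inside $\mathbb{F}_2^4$ by the difference $u+v\in\mathbb{F}_2^4\setminus\{0\}$. Then split the $15$ nonzero differences, together with the $16$ edges at $x$, among $G_0,\dots,G_3$.

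A graph whose edge set is all differences lying outside a subspace $W$ is complete multipartite, with the cosets of $W$ as parts. Such a graph has a small, explicitly computable biclique partition number, namely one less than the number of parts. So the plan is to let $G_0\cup G_i$ be exactly such graphs, or such graphs with a few pendant edges to $x$. Concretely, pick three subspaces $W_1,W_2,W_3$ and set $E(G_i)$ to be the differences in $W_{j}\cap W_{k}$ (suitably adjusted) for $\{i,j,k\}=\{1,2,3\}$, so that $G_0\cup G_i$ misses precisely the differences in $W_i$. The remaining freedom is how to distribute the edges at $x$; it will be used to balance the three terms.

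\textbf{Main obstacle.} The hard step is tuning this configuration so the total lands exactly at $44$ rather than a few above. The structured choice above might give something like $45$--$48$ after accounting for the edges at $x$ and for the graphs $G_i$ themselves, which are unions of cosets-type matchings and may have larger $f_2$ than hoped.

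If the algebraic choice falls short, I would fall back on a small computer search. Fix the symmetric skeleton, and let a local search (moving single edges or vertex classes between the $G_i$) minimize an upper bound on the left-hand side, where each $f_2$ is computed by a greedy or exact biclique-partition solver on $17$ vertices. Once a configuration achieving $44$ is found, the proof consists of displaying the six explicit biclique partitions. These are directly checkable, and we conclude via~\eqref{eq:four-pattern}.
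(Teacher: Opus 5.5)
You set up the right reduction: applying \eqref{eq:four-pattern} with $n=17$ is exactly what the paper does. But that step is routine. The whole content of the lemma is an explicit partition $G_0,\dots,G_3$ of $E(K_{17})$ that achieves the count $44$, and your proposal never exhibits one. You yourself expect the structured candidate to give $45$--$48$, and you fall back on an unspecified computer search, so as written nothing shows that $44$ is attainable. The paper supplies precisely this missing object. It gives explicit graphs with $f_2(G_0\cup G_i)\le 4$ for $i=1,2,3$, $f_2(G_1)\le 8$ and $f_2(G_2),f_2(G_3)\le 6$, giving $(8+6+6)+2(4+4+4)=44$, and it lists the resulting $44$ product blocks, which are checked edge by edge.

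Moreover, the guidance you offer for finding such graphs points the wrong way. For $\{i,j,k\}=\{1,2,3\}$ the graphs $G_0\cup G_i$, $G_j$, $G_k$ partition $E(K_{17})$, so Graham--Pollak gives $f_2(G_0\cup G_i)+f_2(G_j)+f_2(G_k)\ge 16$. Write $A=\sum_i f_2(G_0\cup G_i)$ and $B=\sum_i f_2(G_i)$. Summing over $i$ gives $A+2B\ge 48$, and together with the target $2A+B\le 44$ this forces $A\le 13$ and $B\ge 18$. So your calibration ($A\approx 18$, $B\approx 8$, with unions costing about $6$ and each $G_i$ about $2$--$3$) is impossible. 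The unions must be very cheap and the $G_i$ comparatively expensive, as in the paper.

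The subspace skeleton also fails as literally stated. Requiring $G_0\cup G_i$ to miss exactly the differences in $W_i$ for all three $i$ means every nonzero difference lies in exactly $0$ or $2$ of the $W_i$. Since a vector space is never the union of two proper subspaces, this forces $W_1=W_2=W$ and $W_3=\{0\}$ up to relabeling. Let $m=2^{4-\dim W}$ and restrict every graph to $\mathbb{F}_2^4$. Then $G_3$ becomes a disjoint union of $m$ cliques, $G_0\cup G_1$ and $G_0\cup G_2$ become complete $m$-partite, and $G_0\cup G_3$ becomes $K_{16}$. This bounds the total below by $(16-m)+4(m-1)+30=42+3m\ge 45$, no matter how the edges at $x$ are distributed. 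Hence the ``suitable adjustment'' would have to carry the entire construction.
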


\begin{proof}
Label the vertices of $K_{17}$ by $\{1,\ldots,9,a,b,c,d,e,f,g,h\}$, where the letters $a$ through $h$ represent $10$ through $17$. Let $G_0, G_1, G_2$ and $G_3$ be graphs that form a decomposition of $K_{17}$, defined as follows:
\begingroup
\raggedbottom
\allowdisplaybreaks[1]
\begin{align*}
E(G_0)={}&\{13,1b,27,28,2b,35,36,3a,4a,68,\\*
&\quad 6f,6g,7d,7e,9f,ac,af,be,ce,df\}\\
E(G_0 \cup G_1)={}&\{ij: i \in \{2,3\},j \in \{1,4,5,6,7,8,a,b\}\}\\*
&\cup \{ij: i \in \{6,7\},j \in \{5,8,d,g\}\}\\*
&\cup \{ij: i \in \{a,b\},j \in \{1,4,9,c\}\}\\*
&\cup \{ij: i \in \{e,f\},j \in \{6,7,9,a,b,c,d,g\}\}\\
E(G_0 \cup G_2)={}&\{ij: i \in \{1,2\},j \in \{3,7,8,9,b,h\}\}\\*
&\cup \{ij: i \in \{5,6\},j \in \{1,3,8,b,c,f,g,h\}\}\\*
&\cup \{ij: i \in \{9,a\},j \in \{3,4,5,6,7,8,c,d,f,h\}\}\\*
&\cup \{ij: i \in \{d,e\},j \in \{1,2,5,7,b,c,f,h\}\}\\
E(G_0 \cup G_3)={}&\{ij: i \in \{3,4\},j \in \{1,5,6,a,c,d,e,h\}\}\\*
&\cup \{ij: i \in \{7,8\},j \in \{2,4,6,b,c,d,e,h\}\}\\*
&\cup \{ij: i \in \{b,c\},j \in \{1,2,a,e,g,h\}\}\\*
&\cup \{ij: i \in \{f,g\},j \in \{1,2,3,4,6,8,9,a,d,h\}\}\\
E(G_1)={}&\{ij: i \in \{2\},j \in \{1,4,5,6,a\}\}\\*
&\cup \{ij: i \in \{7\},j \in \{3,5,8,f,g\}\}\\*
&\cup \{ij: i \in \{b\},j \in \{3,4,9,c,f\}\}\\*
&\cup \{ij: i \in \{e\},j \in \{6,9,a,d,g\}\}\\*
&\cup \{ij: i \in \{3\},j \in \{4,8\}\}\\*
&\cup \{ij: i \in \{6\},j \in \{5,d\}\}\\*
&\cup \{ij: i \in \{a\},j \in \{1,9\}\}\\*
&\cup \{ij: i \in \{f\},j \in \{c,g\}\}\\
E(G_2)={}&\{ij: i \in \{2\},j \in \{3,d,h\}\}\\*
&\cup \{ij: i \in \{5\},j \in \{8,d,f,g\}\}\\*
&\cup \{ij: i \in \{1,9,a\},j \in \{5,6,7,8,d,h\}\}\\*
&\cup \{ij: i \in \{1,2,3,4,c\},j \in \{9\}\}\\*
&\cup \{ij: i \in \{5,6,d\},j \in \{b,c,h\}\}\\*
&\cup \{ij: i \in \{1,2,5,f,h\},j \in \{e\}\}\\
E(G_3)={}&\{ij: i \in \{4,7\},j \in \{6,c,h\}\}\\*
&\cup \{ij: i \in \{3,8\},j \in \{d,e,g,h\}\}\\*
&\cup \{ij: i \in \{1,5,7,8,d,e,f,g\},j \in \{4\}\}\\*
&\cup \{ij: i \in \{1,2,3,8,h\},j \in \{c,f\}\}\\*
&\cup \{ij: i \in \{1,2,9,a,c,d,h\},j \in \{g\}\}\\*
&\cup \{ij: i \in \{7,8,a,g,h\},j \in \{b\}\}
\end{align*}
\endgroup

This decompositions give $f_2(G_0\cup G_i)\le4$ for $i=1,2,3$, $f_2(G_1)\le8, f_2(G_2)\le6, f_2(G_3)\le6$.
Applying \eqref{eq:four-pattern} yields
\[
\begin{aligned}
g(K_4,K_{17})
&\le\sum_{i=1}^{3}\bigl(f_2(G_i)+2f_2(G_0\cup G_i)\bigr)\\
&\le(8+6+6)+2(4+4+4)=44.
\end{aligned}
\]
\end{proof}

A complete list of the 44 product blocks is provided in Appendix~\ref{app:product-blocks}, together with a code that independently verifies that they partition $E(K_4)\times E(K_{17})$.

\begin{proposition}[\cite{leader2017decomposing}, Proposition~1]
\label{prop:transfer-f4}
Let $\alpha,C>0$ be constants. If $g(n)\le\alpha n^2+Cn$ for
every positive integer $n$, then
\[
f_4(n)\le\alpha(1+o(1))\frac{n^2}{2}
\]
for sufficiently large $n$.
\end{proposition}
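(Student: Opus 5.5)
The plan follows the Leader--Mili\'cevi\'c--Tan argument. Write $[n]=A\cup B$ with $|A|=\lceil n/2\rceil$, $|B|=\lfloor n/2\rfloor$. The edges of $K_n^4$ split according to how many of their four vertices lie in $A$. The classes with $4$, $3$, $1$ or $0$ vertices in $A$ are handled recursively, and the class with exactly two vertices in each half is handled by the product decomposition.

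\textbf{The balanced class.} An edge with two vertices in $A$ and two in $B$ corresponds bijectively to a pair $(e,e')\in E(K_A)\times E(K_B)$. Hence any partition of $E(K_A)\times E(K_B)$ into sets $E(K_{a,b})\times E(K_{c,d})$ yields complete $4$-partite $4$-graphs. Take parts $X_1,X_2$ (the sides of $K_{a,b}$) and $Y_1,Y_2$ (the sides of $K_{c,d}$); the $4$-graph with parts $X_1,X_2,Y_1,Y_2$ has edge set exactly the corresponding block. Using the hypothesis with $n/2$ (padding $K_B$ if the halves differ by one), this class costs at most $\alpha (n/2)^2+Cn$ hypergraphs.

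\textbf{The unbalanced classes.} Consider an edge with three vertices in $A$ and one in $B$. Take a decomposition of $K_A^3$ into complete $3$-partite $3$-graphs, of which there are $f_3(|A|)=|A|-2$ by Alon. Extending each such $3$-graph with the whole of $B$ as a fourth part covers all these edges. The same holds symmetrically for one vertex in $A$ and three in $B$. This costs $O(n)$.

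\textbf{Recursion and solution.} The classes with all four vertices in $A$, or all four in $B$, are copies of $K_{n/2}^4$ and are handled recursively. This gives
\[ f_4(n)\le 2f_4(n/2)+\alpha\frac{n^2}{4}+O(n). \]
Unrolling, $f_4(n)\le \alpha\frac{n^2}{4}\sum_{k\ge0}2^{-k}+O(n\log n)=\alpha\frac{n^2}{2}+O(n\log n)$. This is $\alpha(1+o(1))\frac{n^2}{2}$.

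\textbf{Main obstacle.} The only delicate points are the floors and ceilings when $n$ is odd, and making sure the linear error terms accumulate only to $O(n\log n)$. I would handle this by proving $f_4(n)\le \alpha n^2/2+Dn\log_2 n$ by strong induction for a suitable constant $D$. The base cases are absorbed into $D$, and the imbalance $\lceil n/2\rceil^2$ versus $(n/2)^2$ contributes only $O(n)$ at each level.
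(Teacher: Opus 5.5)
Your proof is correct and is essentially the standard argument. The paper does not reprove this statement but quotes it from Leader--Mili{\'c}evi{\'c}--Tan, and your halving recursion matches it: the $2+2$ class via $g(\lceil n/2\rceil)$, the $3+1$ and $1+3$ classes via $3$-partite decompositions of one half extended by the other half, and recursion on the two halves, giving $\alpha n^2/2+O(n\log n)$. One small imprecision: for odd $n$ the balanced class costs $\alpha\lceil n/2\rceil^2+C\lceil n/2\rceil=\alpha n^2/4+O(n)$, which need not literally be at most $\alpha(n/2)^2+Cn$ (for instance when $\alpha\ge C$), but your induction with the $Dn\log_2 n$ term absorbs this $O(n)$ excess anyway.
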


\begin{proposition}[\cite{leader2017decomposing}, Proposition~2]
\label{prop:finite-transfer}
Suppose $g(K_a,K_b)<(a-1)(b-1)$ for some integers $a,b\ge2$.
Then there is a constant $C>0$ such that, for every positive
integer $n$,
\[
g(n)\le\alpha n^2+Cn,\qquad
\alpha=\frac{g(K_a,K_b)}{(a-1)(b-1)}.
\]
\end{proposition}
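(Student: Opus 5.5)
We prove Proposition~\ref{prop:finite-transfer} by a blow-up and recursion argument. First we show that $g$ is subadditive-like under blow-ups: if $E(K_a)\times E(K_b)$ is partitioned into $m=g(K_a,K_b)$ products $E(K_{A_1,B_1})\times E(K_{C_1,D_1}),\dots$, then replacing each vertex $x$ of $K_a$ by a class $X_x$ of size $s$ and each vertex $y$ of $K_b$ by a class $Y_y$ of size $t$ turns each block into a product of bicliques, since $K_{\bigcup_{x\in A}X_x,\bigcup_{x\in B}X_x}$ is again a biclique. These $m$ blown-up blocks cover exactly the pairs (edge of $K_{as}$ between distinct classes) $\times$ (edge of $K_{bt}$ between distinct classes). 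The remaining pairs in $E(K_{as})\times E(K_{bt})$ have at least one coordinate inside a class. For those we use the trivial Graham--Pollak bound: the edges inside the classes of $K_{as}$ form $a$ disjoint copies of $K_s$, which can be partitioned into $a(s-1)$ bicliques, and similarly for $K_{bt}$. So the remainder, (inside-classes)$\times E(K_{bt})$ together with (between-classes)$\times$(inside-classes), costs at most $a(s-1)(bt-1)+(a-1)\cdot b(t-1)$ products, using $a-1$ bicliques for the between-class edges via Graham--Pollak on the quotient $K_a$.

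This crude count loses a constant factor, so instead we recurse. Write $N=a^k$ and $M=b^k$ and define $h(k)=g(K_{a^k},K_{b^k})$. The blow-up with $s=a^{k-1}$, $t=b^{k-1}$ covers the between-between part with $m$ products. The within-within part consists of $ab$ copies of $E(K_{a^{k-1}})\times E(K_{b^{k-1}})$, but vertex-disjoint copies in different classes can be merged: a product of bicliques with sides in $X_x$ unioned over all $x$ is not a biclique, so the merging must instead be restricted to classes sharing nothing. The cleaner route is to exploit the Graham--Pollak structure: between-class edges of $K_{as}$ come from a GP decomposition of the quotient $K_a$ into $a-1$ bicliques, and the mixed parts give $(a-1)\cdot b\cdot(t\text{-level})$-type terms. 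The resulting recursion has the shape
\[
h(k)\le m\,\cdot(\text{scaled}) + a b\, h(k-1)+O(a^k+b^k)\cdot(\text{GP terms}).
\]
Normalizing by $(a-1)^k(b-1)^k$-type growth should give $h(k)\le \alpha\, a^k b^k+O(a^k+b^k)$ with $\alpha=m/((a-1)(b-1))$. The hypothesis $m<(a-1)(b-1)$ is exactly what makes the geometric series of error terms converge and yields the linear error $Cn$.

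Finally, we pass from $K_{a^k},K_{b^k}$ to $g(n)=g(K_n,K_n)$ for arbitrary $n$. Taking products of $K_{a^k}$ and $K_{a^k}$, e.g.\ by the symmetric choice $a=b$ or by tensoring the $(a,b)$ and $(b,a)$ constructions, gives square instances. For general $n$ we embed $K_n$ into the nearest larger instance and restrict, since restriction of a biclique product is a biclique product. If that is too lossy, we instead decompose $n$ in base $a$ and split $K_n$ into blocks handled separately, with GP edges between them contributing $O(n)$ products per level.

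The main obstacle is getting the recursion to close with coefficient exactly $\alpha$ and only linear error rather than $O(n^{2-\epsilon})$ or a constant-factor loss. The key point to get right is how the within-class parts are merged across classes. I would first try to follow the Leader--Mili\'cevi\'c--Tan argument of choosing, for each pair of GP bicliques, how to combine them so that the copies at the next level are counted once per biclique pair rather than once per class pair.
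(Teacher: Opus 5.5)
There is a genuine gap: your recursion, carried out as you describe it, does not produce the coefficient $\alpha$. The scheme is equal classes, $m$ products for between$\times$between, Graham--Pollak bicliques for the two mixed parts, and $ab$ independent copies for within$\times$within. It gives $h(k)\le m+(a-1)(b^k-b)+(b-1)(a^k-a)+ab\,h(k-1)$ with $h(0)=0$. Taken with equality, this solves exactly to $h(k)=\beta a^kb^k-a^k-b^k+w$, with $w$ a constant and $\beta=\frac{m+a+b-2}{ab-1}$. Since $ab-1=(a-1)(b-1)+(a+b-2)$, $\beta$ is the mediant of $m/((a-1)(b-1))$ and $1$. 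It is therefore strictly larger than $\alpha$ whenever $m<(a-1)(b-1)$. For the paper's $a=4$, $b=17$, $m=44$ it gives $63/67$ instead of $11/12$, which does not even beat $14/15$.

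The loss comes from charging the mixed parts (between-class edges of one factor times within-class edges of the other) at the Graham--Pollak rate rather than through the gadget. The hypothesis $m<(a-1)(b-1)$ does not rescue this by making any series converge. Your last step is also lossy: restricting from $K_{a^k}$, $K_{b^j}$ with $a^k,b^j\ge n$ can lose a constant factor of nearly $ab$.

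The missing idea is to decompose each factor on its own into complete multipartite graphs and pair every piece of the first factor with every piece of the second; no recursion is needed. Order the vertices as $v_1,\dots,v_n$ and group the Graham--Pollak stars (the star from $v_i$ to all later vertices) into runs of $a-1$ consecutive centres. The stars centred at $v_{(t-1)(a-1)+1},\dots,v_{t(a-1)}$ together form the complete $a$-partite graph $Q_t$ whose parts are these $a-1$ singletons and the set of all later vertices. Hence $E(K_n)=Q_1\sqcup\cdots\sqcup Q_p$ with $p=\lceil (n-1)/(a-1)\rceil$, where the last $Q_t$ may have empty parts. Likewise $E(K_n)=R_1\sqcup\cdots\sqcup R_q$ with complete $b$-partite $R_u$ and $q=\lceil (n-1)/(b-1)\rceil$.

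Your blow-up observation shows that each $E(Q_t)\times E(R_u)$ is partitioned into at most $m$ products of bicliques. So $g(n)\le mpq\le\alpha(n+a-2)(n+b-2)=\alpha n^2+O(n)$ for every $n$. The asymmetry $a\ne b$ causes no difficulty, and the hypothesis $m<(a-1)(b-1)$ is only needed to make $\alpha<1$.

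Equivalently, your own equal-class hierarchy already splits $K_{a^k}$ into $(a^k-1)/(a-1)$ complete $a$-partite graphs, one per internal node. Pairing all of these with all $b$-side pieces, mixed levels included, gives exactly $\alpha(a^k-1)(b^k-1)$.
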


Combining Lemma~\ref{lem:k4-k17} with
Proposition~\ref{prop:finite-transfer} gives
$g(n)\le(11/12)n^2+Cn$ for some constant $C>0$, since
$44/((4-1)(17-1))=11/12$.
Applying Proposition~\ref{prop:transfer-f4} with $\alpha=11/12$
then yields the following corollary.

\begin{corollary}\label{cor:four-uniform}
For sufficiently large $n$,
\[
f_4(n)\le\left(\frac{11}{12}+o(1)\right)\binom{n}{2}.
\]
\end{corollary}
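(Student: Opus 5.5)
The corollary should follow by chaining the three preceding results, so the plan is mainly bookkeeping of hypotheses and constants. First, Lemma~\ref{lem:k4-k17} gives $g(K_4,K_{17})\le 44$. With $a=4$ and $b=17$ we have $(a-1)(b-1)=48$, and $44<48$, so the strict-inequality hypothesis of Proposition~\ref{prop:finite-transfer} holds. If the true value $g(K_4,K_{17})$ were smaller than $44$, the resulting $\alpha$ would only be smaller, which is harmless. To be safe, I would use the bound $44$ directly: the proof of Proposition~\ref{prop:finite-transfer} in \cite{leader2017decomposing} only uses some decomposition of size $g'<(a-1)(b-1)$, and the monotonicity in $\alpha$ handles the rest. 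Either way, we obtain a constant $C>0$ with $g(n)\le \frac{11}{12}n^2+Cn$ for every positive integer $n$.

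Next, I apply Proposition~\ref{prop:transfer-f4} with $\alpha=11/12$ and this $C$. It gives $f_4(n)\le \frac{11}{12}(1+o(1))\frac{n^2}{2}$ for sufficiently large $n$. To finish, I convert to the binomial form. Since $\binom{n}{2}=\frac{n^2}{2}(1-1/n)$, we have $\frac{n^2}{2}=(1+o(1))\binom{n}{2}$. Hence
\[
f_4(n)\le \tfrac{11}{12}(1+o(1))\tbinom{n}{2}=\left(\tfrac{11}{12}+o(1)\right)\tbinom{n}{2},
\]
absorbing the constant factor into the $o(1)$ term.

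The only real obstacle is conceptual rather than computational: both transfer propositions are quoted from \cite{leader2017decomposing}, so nothing needs to be re-derived. The one point to check is that the hypothesis ``$g(n)\le\alpha n^2+Cn$ for every positive $n$'' in Proposition~\ref{prop:transfer-f4} matches exactly the conclusion of Proposition~\ref{prop:finite-transfer}. It does, so the corollary follows immediately.
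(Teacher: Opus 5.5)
Your proposal is correct and follows the paper's route exactly: Lemma~\ref{lem:k4-k17} (with $44<48$) feeds Proposition~\ref{prop:finite-transfer} to give $g(n)\le\frac{11}{12}n^2+Cn$. Proposition~\ref{prop:transfer-f4} with $\alpha=11/12$ and the conversion $n^2/2=(1+o(1))\binom{n}{2}$ then finish the argument, and your remark that a smaller true value of $g(K_4,K_{17})$ would only lower $\alpha$ is a harmless refinement of the paper's implicit use of $g(K_4,K_{17})\le 44$.
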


Corollary~\ref{cor:four-uniform} improves the coefficient $14/15$
in \cite[Theorem~11]{leader2017decomposing} to $11/12$.

\subsection{Improved bounds for $f_r(n)$}

Consider the odd-uniform case, where the existing constructions leave a larger leading constant. In the case of hypergraphs, Leader and
Tan~\cite{leader2018improved} partitioned the vertex set and
decomposed most edges by pairing vertex classes and applying improved
decompositions of products of complete graphs. For $r=2d+1$,
their approach leaves two families of edge classes corresponding
to the intersection patterns $1+2+\cdots+2$ and $3+2+\cdots+2$,
which together determine the leading term in the upper bound.

Our approach treats these remaining edge classes collectively rather than individually. We partition $[d]$ into subsets $P_1,\ldots,P_\ell$ and group the edge classes according to the
position of the part of size $3$. We then construct an exact cover
for each group. Using $g(n)\le(11/12)n^2+O(n)$ from
Lemma~\ref{lem:k4-k17} and Proposition~\ref{prop:finite-transfer},
this reduces the leading contribution to
\[
\sum_{j=1}^{\ell}\left(\frac{11}{12}\right)^{\lfloor(d-|P_j|)/2\rfloor},
\]
thereby improving the upper bound for $c_r$. In particular, this
construction implies that $c_r<1$ for every odd $r\ge65$.

Let $V_1,\ldots,V_k$ be pairwise disjoint sets and let $a_1,\ldots,a_k$ be non-negative integers. Define $ \displaystyle \binom{V_1}{a_1} \times \cdots \times \binom{V_k}{a_k}$ as the set of all subsets $X$ of $V_1 \cup \cdots \cup V_k$ such that $|X \cap V_i|=a_i$ for every $1 \leq i \leq k$. We also denote all such subsets as $\displaystyle \prod_{i=1}^k \binom{V_i}{a_i}$. A set $\Gamma$ of complete $r$-partite $r$-graphs over $V_1 \cup \cdots \cup V_k$ is said to {\em exactly cover} a hypergraph $F$, if the hypergraphs in $\Gamma$ are edge-disjoint and the union of the edges of the hypergraphs in $\Gamma$ is $F$. Recall that a complete $r$-partite $r$-graph is also referred to as a {\em block}. So $f_r(n)$ is the minimum number of complete $r$-partite $r$-graphs required to exactly cover the edge set of the complete $r$-uniform hypergraph on $n$ vertices. Note that the aforementioned result of Leader, Mili{'c}evi{'c}, and Tan~\cite{leader2017decomposing} translates to, for two disjoint sets $U$ and $V$ each of size $n$, the number of blocks with four parts needed to exactly cover $\displaystyle \binom{U}{2} \times \binom{V}{2}$ is $g(n)$.

Let $H_i$ be a complete $r_i$-partite $r_i$-graph with vertex classes
$A_{i,1},\ldots,A_{i,r_i}$, for $1\le i\le t$, and suppose that
the vertex sets of $H_1,\ldots,H_t$ are pairwise disjoint. Define the
\emph{product} of $H_1,\ldots,H_t$ to be the complete
$(r_1+\cdots+r_t)$-partite $(r_1+\cdots+r_t)$-graph $H_1\times\cdots\times H_t$ whose vertex classes are $A_{1,1},\ldots, \allowbreak A_{1,r_1}, A_{2,1},\ldots, A_{2,r_2}, \ldots, A_{t,1},\ldots, A_{t,r_t}$. Let $\mathcal{B}_1,\ldots,\mathcal{B}_t$ be families of complete
multipartite hypergraphs on pairwise disjoint vertex sets. Define the
\emph{product} of these families by $
\mathcal{B}_1\times\cdots\times\mathcal{B}_t
=
\left\{
H_1\times\cdots\times H_t:
H_i\in\mathcal{B}_i
\text{ for }1\le i\le t
\right\}$.
Throughout this section, fix an ordering of the elements of each set
$V_i$. For each $x\in V_i$, let $V_i^{<x}$ and $V_i^{>x}$ denote the
sets of elements of $V_i$ that precede and follow $x$, respectively, in
this ordering.
\begin{lemma}\label{lemma1}
Let $V_1,\ldots,V_s$ be pairwise disjoint sets, each of size $n$. Then
\[\displaystyle
\bigcup_{t \in [s]}
\Biggl[
\binom{V_t}{3}
\prod_{\substack{i \in [s]\\ i \neq t}}
\binom{V_i}{2}
\Biggr]\]
admits an exact cover by at most $n^s$ blocks.
\end{lemma}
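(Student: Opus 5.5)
The plan is a direct explicit construction. Each block will be indexed by a tuple $(x_1,\ldots,x_s)\in V_1\times\cdots\times V_s$, which gives at most $n^s$ blocks. The key idea is to use one extra vertex class that is spread across all the $V_i$ simultaneously. This lets a single block serve all $s$ choices of the index $t$ at once, instead of spending $n^s$ blocks separately for each $t$.

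For a tuple $x=(x_1,\ldots,x_s)$, let $B_x$ be the complete $(2s+1)$-partite $(2s+1)$-graph with the following vertex classes:
\[
\{x_1\},\ V_1^{>x_1},\ \ldots,\ \{x_s\},\ V_s^{>x_s},\ U_x:=\bigcup_{i\in[s]} V_i^{<x_i}.
\]
These $2s+1$ classes are pairwise disjoint, since the $V_i$ are disjoint and within each $V_i$ the sets $\{x_i\}$, $V_i^{>x_i}$, $V_i^{<x_i}$ are disjoint. Some classes may be empty; such blocks have no edges and can be discarded, so at most $n^s$ blocks remain.

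I would then check two things.

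\emph{Each edge of $B_x$ lies in the target family.} Such an edge contains $x_i$ and one element of $V_i^{>x_i}$ for every $i$, together with one element $u\in U_x$. That $u$ lies in $V_t^{<x_t}$ for a unique $t$. Hence the edge meets $V_t$ in exactly $3$ elements and every other $V_i$ in exactly $2$ elements, which is an edge of the $t$-th term of the union.

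\emph{Each edge of the union lies in exactly one block.} Take $X$ with $|X\cap V_t|=3$ and $|X\cap V_i|=2$ for $i\ne t$; these conditions determine $t$. Suppose $X$ is an edge of $B_x$. For $i\ne t$, the set $X\cap V_i$ must be exactly $\{x_i\}$ plus one element of $V_i^{>x_i}$, because the class $U_x$ contributes to only one $V_i$, and that one receives three elements. This forces $x_i=\min(X\cap V_i)$. For $i=t$, the set $X\cap V_t$ is $\{u,x_t,w\}$ with $u<x_t<w$, which forces $x_t$ to be the middle element of $X\cap V_t$. So $x$ is uniquely determined by $X$. Conversely, this choice of $x$ does place $X$ in $B_x$. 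Therefore the blocks are edge-disjoint and their union is exactly the given family.

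The main obstacle is not any computation but finding the construction itself: one has to see that a triple should be encoded by its middle element, with its smallest element drawn from a shared class $U_x$. Once that is in place, the uniqueness argument above is routine bookkeeping.
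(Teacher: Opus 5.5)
Your proof is correct and matches the paper's construction up to reversing the ordering. The paper uses the classes $V_i^{<x_i},\{x_i\}$ with the shared class $\bigcup_i V_i^{>x_i}$, so $x_i$ is the maximum of $X\cap V_i$ for $i\ne t$ and the middle element of $X\cap V_t$; you use $\{x_i\},V_i^{>x_i}$ with the shared class $\bigcup_i V_i^{<x_i}$, and your uniqueness argument supplies the verification that the paper only asserts.
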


\begin{proof}
For each choice of $\{x_1,\ldots,x_s\}$, where $x_i\in V_i$ for $1\leq i\leq s$, we define a block as follows:
\begin{equation*}
\begin{split}
&\{V_1^{< x_1}\},\{x_1\},\cdots ,\{V_s^{< x_s}\},\{x_s\},\{\bigcup_{i=1}^s V_i^{> x_i}\}\\
\end{split}
\end{equation*}
Keep only those blocks in which all $2s+1$ parts are nonempty. Note that these blocks form an exact cover of the family. Since there are at most $n^s$ choices of $\{x_1,\ldots,x_s\}$, the result follows.
\end{proof}

\begin{lemma}\label{lemma2}
Let $V_1,\ldots,V_d$ be pairwise disjoint sets, each of size $n$, and let $P\subseteq[d]$ with $|P|=s$. Then
$\displaystyle \bigcup_{t\in P}
\Biggl[
\binom{V_t}{3}
\prod_{\substack{i\in[d]\\ i\neq t}}
\binom{V_i}{2}
\Biggr]$
admits an exact cover with at most
\[
\begin{cases}
n^{s}\,g(n)^{(d-s)/2},
& \text{if $d-s$ is even},\\[1ex]
n^{s+1}\,g(n)^{(d-s-1)/2},
& \text{if $d-s$ is odd},
\end{cases}
\]
blocks.
\end{lemma}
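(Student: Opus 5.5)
The plan is to factor the family into a part that sees the index $t$ and a part that does not, and then take products of exact covers. After relabelling we may assume $P=\{1,\ldots,s\}$. Every set in the family has the form $X\cup Y$ with
\[
X\in\bigcup_{t\in P}\Bigl[\binom{V_t}{3}\prod_{\substack{i\in P\\ i\neq t}}\binom{V_i}{2}\Bigr],\qquad
Y\in\prod_{i\in[d]\setminus P}\binom{V_i}{2},
\]
and conversely every such union lies in the family. The two factors live on the disjoint ground sets $\bigcup_{i\in P}V_i$ and $\bigcup_{i\notin P}V_i$. So if $\mathcal{B}_1$ exactly covers the first factor and $\mathcal{B}_2$ exactly covers the second, then $\mathcal{B}_1\times\mathcal{B}_2$ exactly covers the family. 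Indeed, each $X\cup Y$ lies in exactly one product block, namely the product of the unique block of $\mathcal{B}_1$ containing $X$ and the unique block of $\mathcal{B}_2$ containing $Y$, because a block's parts separate the two ground sets. Also $|\mathcal{B}_1\times\mathcal{B}_2|=|\mathcal{B}_1|\,|\mathcal{B}_2|$.

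For the first factor, Lemma~\ref{lemma1} applied to $V_1,\ldots,V_s$ gives an exact cover $\mathcal{B}_1$ with at most $n^s$ blocks.

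For the second factor, the case $d-s$ even is handled by pairing the indices of $[d]\setminus P$ into $(d-s)/2$ pairs $\{i,j\}$. The remark before Lemma~\ref{lemma1} says that $\binom{V_i}{2}\times\binom{V_j}{2}$ is exactly covered by $g(n)$ four-part blocks. The product of these families exactly covers $\prod_{i\notin P}\binom{V_i}{2}$ by the same disjoint-support argument, iterated, and uses $g(n)^{(d-s)/2}$ blocks.

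If $d-s$ is odd, one index $m\notin P$ is left unpaired, and we must absorb it into the first factor. The cleanest way is to exactly cover $\binom{V_m}{2}$ by $n-1$ bicliques (Graham--Pollak, for instance the stars $\{x\}\times V_m^{>x}$) and multiply, which gives $n^s(n-1)g(n)^{(d-s-1)/2}\le n^{s+1}g(n)^{(d-s-1)/2}$.

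I expect the main obstacle to be checking that the product of exact covers on disjoint ground sets is again an exact cover for these unions of ``$\binom{V}{3}$ in one coordinate'' families. The point is that the part of size $3$ always lies in $P$, so the factorization $X\cup Y$ is unique. Once that is verified carefully, the counting above is routine.
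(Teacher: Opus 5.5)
Your proposal is correct and follows the paper's proof essentially step for step: you apply Lemma~\ref{lemma1} on the coordinates in $P$, pair the remaining coordinates using $g(n)$ blocks per pair (plus $n-1$ bicliques for the leftover coordinate when $d-s$ is odd), and take the product of the two exact covers. Two small remarks: your explicit check that products of exact covers on disjoint ground sets are again exact covers is a step the paper leaves implicit, and the unpaired coordinate is not really ``absorbed into the first factor'' but simply multiplied in as an extra factor, exactly as the paper does.
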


\begin{proof}
Let $P=\{i_1,\ldots,i_s\}$. For each choice of $\{x_{i_1},\ldots,x_{i_s}\}$, where $x_{i_k}\in V_{i_k}$ for $i_k \in P$, let $B_u$ be the block as follows:
\begin{equation*}
\begin{split}
&\{V_{i_1}^{< x_{i_1}}\},\{x_{i_1}\},\cdots ,\{V_{i_s}^{< x_{i_s}}\},\{x_{i_s}\},\{\bigcup_{k=1}^s V_{i_k}^{> x_{i_k}}\}\\
\end{split}
\end{equation*}
Keep only those blocks whose $2s+1$ parts are all nonempty. By Lemma~\ref{lemma1}, these blocks form an exact cover of $\displaystyle \bigcup_{t\in P} \Biggl[\binom{V_t}{3} \prod_{\substack{i\in P\\ i\neq t}} \binom{V_i}{2}\Biggr].$ Let $\mathcal{\overline{B}}_{u}$ be the family of blocks $B_u$.

Let $[d]\setminus P=\{j_1,\ldots,j_{d-s}\}$. Pair the sets
$V_{j_1},\ldots,V_{j_{d-s}}$, leaving one set unpaired when $d-s$ is
odd. For each pair $(V_{j_{2i-1}},V_{j_{2i}})$, let $\mathcal{B}_{i}$ be a
family of $g(n)$ blocks with four parts that exactly covers 
$\displaystyle \binom{V_{j_{2i-1}}}{2}\times\binom{V_{j_{2i}}}{2}$.
If $d-s$ is odd, let $\mathcal{B}_0$ be a family of $n-1$ blocks with two parts that exactly covers $\displaystyle \binom{V_{j_{d-s}}}{2}$.

Let $\mathcal{\overline{B}}_{w}$ be the family of blocks obtained by taking the product of all the $\mathcal{B}_i$s (and $\mathcal{B}_0$, when $d-s$ is odd). Each block in $\mathcal{\overline{B}}_{w}$ has $2d-2s$ parts, and the family $\mathcal{\overline{B}}_{w}$ exactly covers 
$\displaystyle \prod_{k=1}^{d-s}\binom{V_{j_k}}{2}$.
Moreover,
\[
|\mathcal{\overline{B}}_{w}|
\le
\begin{cases}
g(n)^{(d-s)/2}, & \text{if $d-s$ is even},\\[1ex]
(n-1)\,g(n)^{(d-s-1)/2},
& \text{if $d-s$ is odd}.
\end{cases}
\]
Consider the product $\mathcal{\overline{B}}_{u} \times \mathcal{\overline{B}}_{w}$. The resulting blocks in the family have $2d+1$ parts and admit an exact cover of $\displaystyle \bigcup_{t\in P} \Biggl[
\binom{V_t}{3} \prod_{\substack{i\in[d]\\ i\neq t}} \binom{V_i}{2}
\Biggr]$. 
\end{proof}

\begin{lemma}\label{lemma3}
Let $V_1,\ldots,V_k$ be pairwise disjoint sets, each of size $n$, and let
$P\subseteq[d]$ with $|P|=s$ and $d<k$. Define $U=\bigcup_{i\notin[d]}V_i$.
Then $\displaystyle \Biggl[\binom{U}{1} \prod_{i \in P} \binom{V_i}{2}\Biggr] \cup \Biggl\{ \bigcup_{t \in P} \Biggl[ \binom{V_t}{3} \prod_{\substack{i \in P \\ i \neq t}} \binom{V_i}{2} \Biggr] \Biggr \}$
admits an exact cover using at most $n^s$ blocks.
\end{lemma}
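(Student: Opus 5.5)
The plan is to modify the block construction of Lemma~\ref{lemma1} by enlarging the last part so that it also absorbs the vertices of $U$. Write $P=\{i_1,\ldots,i_s\}$. For each choice of $x_{i_k}\in V_{i_k}$, $1\le k\le s$, define the block
\[
\{V_{i_1}^{<x_{i_1}}\},\{x_{i_1}\},\ldots,\{V_{i_s}^{<x_{i_s}}\},\{x_{i_s}\},\Bigl\{U\cup\bigcup_{k=1}^{s}V_{i_k}^{>x_{i_k}}\Bigr\},
\]
and keep only those blocks in which all $2s+1$ parts are nonempty. There are at most $n^s$ such blocks. It therefore suffices to show that these blocks are edge-disjoint and that their edges are exactly the union of $\binom{U}{1}\prod_{i\in P}\binom{V_i}{2}$ and $\bigcup_{t\in P}\bigl[\binom{V_t}{3}\prod_{i\in P,i\neq t}\binom{V_i}{2}\bigr]$.

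\textbf{Containment.} Take any edge of one of these blocks. For every $k$ it contains exactly two vertices of $V_{i_k}$ that come from the first $2s$ parts, namely one vertex $y<x_{i_k}$ and $x_{i_k}$ itself. The final part contributes one more vertex $z$, and $z$ lies either in $U$ or in some $V_{i_t}^{>x_{i_t}}$. In the first case the edge lies in $\binom{U}{1}\prod_{i\in P}\binom{V_i}{2}$. In the second case it lies in $\binom{V_{i_t}}{3}\prod_{i\in P,i\ne t}\binom{V_i}{2}$. No vertex outside $U\cup\bigcup_{i\in P}V_i$ ever appears, so vertices of $V_j$ with $j\in[d]\setminus P$ cause no trouble.

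\textbf{Exact cover.} Given a target edge $e$, I reconstruct the unique block containing it.
\begin{itemize}
\item For each $i\in P$ with $|e\cap V_i|=2$, let $x_i$ be the larger of the two elements.
\item For the index $t$ (if any) with $|e\cap V_t|=3$, let $x_t$ be the middle element. The smallest element then lies in $V_t^{<x_t}$ and the largest in $V_t^{>x_t}$, so the largest must be the vertex taken from the last part.
\item If instead $e$ meets $U$ in a single vertex, that vertex must be the one from the last part.
\end{itemize}
This shows that $e$ lies in the block indexed by $(x_{i_1},\ldots,x_{i_s})$. The corresponding parts are nonempty because $e$ itself witnesses an element in each of them.

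\textbf{Uniqueness.} Conversely, suppose $e$ lies in some block indexed by $(x'_{i_k})$. For each $k$, the element of $V_{i_k}$ taken from the part $\{V_{i_k}^{<x'_{i_k}}\}$ is smaller than $x'_{i_k}$. Any element of $V_{i_k}$ taken from the last part is larger than $x'_{i_k}$. Hence $x'_{i_k}$ is forced to be the larger element when $|e\cap V_{i_k}|=2$, and the middle element when $|e\cap V_{i_k}|=3$. This gives disjointness.

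The only step needing care is this last one, the uniqueness argument. In particular, I must check that the last part cannot mix a $U$-vertex with an extra $V_{i_t}$-vertex. It cannot, because the last part contributes exactly one vertex per edge. The rest is the same bookkeeping as in Lemma~\ref{lemma1}, which I can cite directly for the case where the last vertex lies in $\bigcup_{i\in P}V_i$.
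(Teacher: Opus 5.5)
Your proposal is correct and uses the same construction as the paper: the blocks indexed by $(x_{i_1},\ldots,x_{i_s})$ with parts $V_{i_k}^{<x_{i_k}}$ and $\{x_{i_k}\}$, where $U$ is adjoined to the final part $\bigcup_k V_{i_k}^{>x_{i_k}}$ and blocks with an empty part are discarded. The paper only asserts that these blocks form an exact cover, so your containment check, your reconstruction of the index ($x_i$ is the larger element of a pair or the middle element of the triple), and your uniqueness argument supply the verification it leaves implicit.
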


\begin{proof}
Let $P=\{i_1,\ldots,i_s\}$. For each choice of $\{x_{i_1},\ldots,x_{i_s}\}$, where $x_{i_k}\in V_{i_k}$ for $i_k \in P$, we define a block as follows:
\begin{equation*}
\begin{split}
&\{V_{i_1}^{< x_{i_1}}\},\{x_{i_1}\},\cdots ,\{V_{i_s}^{< x_{i_s}}\},\{x_{i_s}\},\{ U \bigcup_{k=1}^s V_{i_k}^{> x_{i_k}}\}\\
\end{split}
\end{equation*}
Keep only those blocks whose $2s+1$ parts are all nonempty. These blocks admit an exact cover of the family.

\end{proof}

\begin{lemma}\label{lemma4}
Let $V_1,\ldots,V_k$ be pairwise disjoint sets, each of size $n$, and let $P\subseteq[d]$ with $|P|=s$ and $d<k$. Define $U=\bigcup_{i\notin[d]}V_i$. Then $\displaystyle \Biggl[
\binom{U}{1} \prod_{i\in[d]}\binom{V_i}{2} \Biggr] \cup
\Biggl\{\bigcup_{t\in P} \Biggl[\binom{V_t}{3}
\prod_{\substack{i\in[d]\\ i\neq t}} \binom{V_i}{2} \Biggr] \Biggr\}$
admits an exact cover of size at most
\[
\begin{cases}
n^{s}\,g(n)^{(d-s)/2},
& \text{if $d-s$ is even},\\[1ex]
n^{s+1}\,g(n)^{(d-s-1)/2},
& \text{if $d-s$ is odd},
\end{cases}
\]
blocks.
\end{lemma}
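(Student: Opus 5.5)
We mirror the proof of Lemma~\ref{lemma2}, with Lemma~\ref{lemma3} replacing Lemma~\ref{lemma1} for the coordinates in $P$. The idea is that every set in the family splits into a ``$P$-part'' and a ``$([d]\setminus P)$-part''. For $X$ in the family, let $X_P = X\cap\bigl(U\cup\bigcup_{i\in P}V_i\bigr)$ and $X_W = X\cap\bigcup_{j\in[d]\setminus P}V_j$. If $X\in\binom{U}{1}\prod_{i\in[d]}\binom{V_i}{2}$, then $X_P\in\binom{U}{1}\prod_{i\in P}\binom{V_i}{2}$. If $X\in\binom{V_t}{3}\prod_{i\ne t}\binom{V_i}{2}$ with $t\in P$, then $X_P\in\binom{V_t}{3}\prod_{i\in P,i\ne t}\binom{V_i}{2}$. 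In both cases $X_W\in\prod_{j\in[d]\setminus P}\binom{V_j}{2}$. Conversely, any pair $(X_P,X_W)$ with $X_P$ in the union of Lemma~\ref{lemma3} and $X_W$ in that product gives a member of the family. So the family is exactly the ``join product'' of the family in Lemma~\ref{lemma3} with $\prod_{j\in[d]\setminus P}\binom{V_j}{2}$. The two factors live on the disjoint ground sets $U\cup\bigcup_{i\in P}V_i$ and $\bigcup_{j\notin P, j\le d}V_j$.

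Concretely, I would take $\overline{\mathcal B}_u$ to be the family of at most $n^s$ blocks with $2s+1$ parts from Lemma~\ref{lemma3}. It exactly covers $\bigl[\binom{U}{1}\prod_{i\in P}\binom{V_i}{2}\bigr]\cup\bigcup_{t\in P}\bigl[\binom{V_t}{3}\prod_{i\in P,i\neq t}\binom{V_i}{2}\bigr]$. For $\overline{\mathcal B}_w$ I would pair up the sets $V_j$, $j\in[d]\setminus P$, exactly as in Lemma~\ref{lemma2}. Each pair is covered by $g(n)$ four-part blocks, and when $d-s$ is odd the leftover $V_j$ is covered by $n-1$ two-part blocks via Graham--Pollak. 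Taking products gives $|\overline{\mathcal B}_w|\le g(n)^{(d-s)/2}$ or $(n-1)g(n)^{(d-s-1)/2}$, and $\overline{\mathcal B}_w$ exactly covers $\prod_{j\in[d]\setminus P}\binom{V_j}{2}$ with $(2d-2s)$-part blocks. The candidate cover is $\overline{\mathcal B}_u\times\overline{\mathcal B}_w$, whose blocks have $2d+1$ parts. Its size is at most $n^s g(n)^{(d-s)/2}$ when $d-s$ is even, and at most $n^s(n-1)g(n)^{(d-s-1)/2}\le n^{s+1}g(n)^{(d-s-1)/2}$ when $d-s$ is odd.

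The step to check carefully is that the product of two exact covers on disjoint ground sets is an exact cover of the join family. An edge of $H_1\times H_2$ is the union of an edge of $H_1$ and an edge of $H_2$. Given $X$, its decomposition $(X_P,X_W)$ is forced by the ground sets. Therefore $X$ lies in $H_1\times H_2$ if and only if $X_P\in E(H_1)$ and $X_W\in E(H_2)$, and exactly one such pair exists by exactness of each factor. This gives coverage and disjointness together.

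The main (though mild) obstacle is the bookkeeping in the first paragraph. We must confirm that the decomposition is a bijection between the target family and the product of the two factor families, with no stray sets. For instance, a set with three vertices in some $V_t$, $t\notin P$, must not be created. It is not, because $\overline{\mathcal B}_w$ only produces two vertices from each such $V_j$. Likewise, sets meeting $U$ in more than one vertex never arise, since $U$ appears in only one part of each block of Lemma~\ref{lemma3}.
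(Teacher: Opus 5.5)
Your proposal is correct and matches the paper's proof. You take $\overline{\mathcal B}_u$ from Lemma~\ref{lemma3} on $U\cup\bigcup_{i\in P}V_i$ and $\overline{\mathcal B}_w$ from Lemma~\ref{lemma2} on the remaining $V_j$, then use $\overline{\mathcal B}_u\times\overline{\mathcal B}_w$ with the count $n^s(n-1)g(n)^{(d-s-1)/2}\le n^{s+1}g(n)^{(d-s-1)/2}$ in the odd case; your explicit check that a product of exact covers on disjoint ground sets exactly covers the join family, and that the target family is precisely that join, fills in a step the paper only asserts.
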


\begin{proof}
Let $P=\{i_1,\ldots,i_s\}$. Consider the family of blocks $\mathcal{\overline{B}}_{u}$ as mentioned in Lemma~\ref{lemma3}, that exactly covers $ \displaystyle \Biggl[
\binom{U}{1}
\prod_{i\in P}\binom{V_i}{2}
\Biggr]
\cup
\Biggl\{
\bigcup_{t\in P}
\Biggl[
\binom{V_t}{3}
\prod_{\substack{i\in P\\ i\neq t}}
\binom{V_i}{2}
\Biggr]
\Biggr\}.$ Let $[d]\setminus P=\{j_1,\ldots,j_{d-s}\}$. Consider the family of blocks $\mathcal{\overline{B}}_{w}$ as mentioned in Lemma~\ref{lemma2}, that exactly covers $\displaystyle \prod_{k=1}^{d-s}\binom{V_{j_k}}{2}$. Thus the product $\mathcal{\overline{B}}_{u} \times \mathcal{\overline{B}}_{w}$, has blocks with $2d+1$ parts and admits an exact cover of
$\displaystyle \Biggl[
\binom{U}{1}
\prod_{i\in[d]}\binom{V_i}{2}
\Biggr]
\cup
\Biggl\{
\bigcup_{t\in P}
\Biggl[
\binom{V_t}{3}
\prod_{\substack{i\in[d]\\ i\neq t}}
\binom{V_i}{2}
\Biggr]
\Biggr\}.$
\end{proof}

\begin{lemma}\label{lemma5}
Let $V_1,\ldots,V_k$ be pairwise disjoint sets of size $n$, and let $P=\{P_1,\ldots,P_l\}$ be a partition of $[d]$, where $|P_j|=s_j$ for $1\leq j\leq l$ and $d<k$. Define $U=\bigcup_{i\notin[d]}V_i$.
Then $\displaystyle
\Biggl[
\binom{U}{1}
\prod_{i\in[d]}\binom{V_i}{2}
\Biggr]
\cup
\Biggl\{
\bigcup_{t\in[d]}
\Biggl[
\binom{V_t}{3}
\prod_{\substack{i\in[d]\\ i\neq t}}
\binom{V_i}{2}
\Biggr]
\Biggr\}$
admits an exact cover using at most 
\[
\sum_{\substack{1\le j\le \ell\\ d-s_j\text{ even}}}
n^{s_j}\,g(n)^{(d-s_j)/2}
+
\sum_{\substack{1\le j\le \ell\\ d-s_j\text{ odd}}}
n^{s_j+1}\,g(n)^{(d-s_j-1)/2}
\]
blocks.
\end{lemma}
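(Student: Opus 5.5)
The plan is to split the target family into pieces indexed by the parts $P_1,\ldots,P_\ell$ of the partition $P$. We cover each piece separately using Lemma~\ref{lemma4} or Lemma~\ref{lemma2}, and then take the union of the resulting block families. The key observation is that the families being covered are pairwise disjoint sets of $(2d+1)$-subsets, so the union of exact covers of the pieces is an exact cover of the whole.

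First I check disjointness. For a fixed $t\in[d]$, the family $\binom{V_t}{3}\prod_{i\in[d],\,i\neq t}\binom{V_i}{2}$ consists of sets $X$ with $|X\cap V_t|=3$, $|X\cap V_i|=2$ for $i\in[d]\setminus\{t\}$, and $X\cap U=\emptyset$. Hence families for distinct $t$ are disjoint, because their intersection patterns with the $V_i$ differ. Likewise $\binom{U}{1}\prod_{i\in[d]}\binom{V_i}{2}$ is disjoint from all of them, since its sets meet $U$ and the others do not. Since $P$ is a partition of $[d]$, the target family is the disjoint union of the following $\ell$ families:
\[
\mathcal{F}_1=\Biggl[\binom{U}{1}\prod_{i\in[d]}\binom{V_i}{2}\Biggr]\cup\bigcup_{t\in P_1}\Biggl[\binom{V_t}{3}\prod_{\substack{i\in[d]\\ i\neq t}}\binom{V_i}{2}\Biggr],
\qquad
\mathcal{F}_j=\bigcup_{t\in P_j}\Biggl[\binom{V_t}{3}\prod_{\substack{i\in[d]\\ i\neq t}}\binom{V_i}{2}\Biggr]\quad(2\le j\le \ell).
\]
The $U$-term is attached to exactly one part. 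It must not be attached to several, since otherwise those edges would be covered more than once.

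Next I apply Lemma~\ref{lemma4} with $P=P_1$, $s=s_1$ to exactly cover $\mathcal{F}_1$. For each $j\ge 2$, I apply Lemma~\ref{lemma2} with $P=P_j$, $s=s_j$ to exactly cover $\mathcal{F}_j$. Both lemmas give the same bound: $n^{s_j}g(n)^{(d-s_j)/2}$ when $d-s_j$ is even, and $n^{s_j+1}g(n)^{(d-s_j-1)/2}$ when $d-s_j$ is odd. Every block produced is a complete $(2d+1)$-partite $(2d+1)$-graph. The union of these $\ell$ families is edge-disjoint, because blocks from different $j$ cover edges in disjoint families $\mathcal{F}_j$, and within one $j$ the lemma already guarantees disjointness. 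Its edge set is exactly $\bigcup_j\mathcal{F}_j$, which is the target. Summing the bounds over $j$, split by the parity of $d-s_j$, gives the stated total.

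The only point needing care is the disjointness and bookkeeping: the $U$-term must be assigned to a single part, and the families for different $t$ must not overlap. Once the intersection-pattern argument above is in place, this step is routine. One should also note that empty parts ($s_j=0$) do not arise, since the $P_j$ are parts of a partition and hence nonempty.
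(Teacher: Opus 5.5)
Your proposal is correct and follows essentially the same route as the paper: both attach the $U$-term to a single part of the partition (you fix $P_1$, the paper an arbitrary $P_j$), cover it together with that part's families via Lemma~\ref{lemma4}, cover every other part's families via Lemma~\ref{lemma2}, and sum the identical bounds. Your explicit intersection-pattern check that the $U$-term is disjoint from the families with three vertices in some $V_t$ is a point the paper leaves implicit.
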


\begin{proof}
For $1\le j\le \ell$, let
\[
\mathcal{F}_j=
\bigcup_{t\in P_j}
\left[
\binom{V_t}{3}
\prod_{\substack{i\in[d]\\ i\neq t}}
\binom{V_i}{2}
\right].
\]
Also, define
\[
\mathcal{F}_0=
\left[
\binom{U}{1}
\prod_{i\in[d]}\binom{V_i}{2}
\right].
\]
Then
\[
\mathcal{F}_0\cup
\bigcup_{j=1}^{\ell}\mathcal{F}_j
=
\left[
\binom{U}{1}
\prod_{i\in[d]}\binom{V_i}{2}
\right]
\cup
\left\{
\bigcup_{t\in[d]}
\left[
\binom{V_t}{3}
\prod_{\substack{i\in[d]\\ i\neq t}}
\binom{V_i}{2}
\right]
\right\}.
\]

Apply Lemma ~\ref{lemma4} with $P=P_j$. This yields an exact cover of $\mathcal{F}_0\cup\mathcal{F}_j$ using at most $n^{s_j}\,g(n)^{(d-s_j)/2}$ blocks, if $d-s_j$ is even and $n^{s_j+1}\,g(n)^{(d-s_j-1)/2}$ blocks, if $d-s_j$ is odd.

The families $\mathcal{F}_1,\ldots,\mathcal{F}_{\ell}$ are pairwise
disjoint, because each edge contains three vertices in exactly one set
$V_t$, and the index $t$ belongs to a unique part of the partition
$P$. Applying Lemma~\ref{lemma2} with $P=P_i$, for each $1 \leq i \neq j \leq \ell$ yields an exact cover for $\mathcal{F}_i$ using $n^{s_i}\,g(n)^{(d-s_i)/2}$ blocks, if $d-s_i$ is even and $n^{s_i+1}\,g(n)^{(d-s_i-1)/2}$ blocks, if $d-s_i$ is odd.

Combining these covers yields an exact cover of $\mathcal{F}_0\cup
\bigcup_{j=1}^{\ell}\mathcal{F}_j$. 
\end{proof}

\begingroup
\let\originaltrivlist\trivlist
\renewcommand{\trivlist}{\setlength{\topsep}{6pt}\originaltrivlist}
\begin{theorem}\label{maintheorem}
\setlength{\abovedisplayskip}{6pt}
\setlength{\belowdisplayskip}{6pt}
\setlength{\abovedisplayshortskip}{6pt}
\setlength{\belowdisplayshortskip}{6pt}
Let $r=2d+1$ be fixed. Then for each $k$, there exists $\varepsilon_k\to0$ as
$k\to\infty$, such that for all $n$
\[
f_r(kn)
\le
\min_{s_1+ \cdots +s_{\ell}=d} \left[\sum_{j=1}^{\ell}\left(\frac{11}{12}\right)^{\lfloor (d-s_j)/2\rfloor} + \varepsilon_k \right](1+o(1))\binom{kn}{d},
\]
where $P= P_1\cup\ldots\cup P_\ell$ is a partition of $[d]$ with
$|P_j|=s_j$. (Here $o(1)$ term is as $n\to\infty$, with $k$ and $d$ fixed.)
\end{theorem}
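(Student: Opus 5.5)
We follow the Leader--Tan framework. Split the $kn$ vertices into $k$ classes $V_1,\ldots,V_k$ of size $n$, and classify each edge of $K_{kn}^{r}$ by its intersection pattern, that is, the multiset of sizes $|e\cap V_i|$. The count is dominated by patterns with $d$ classes of size $2$ and one extra vertex, and the number of edges of every other pattern is lower order in $k$.

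\textbf{Dominant patterns.} Fix an ordered choice of $d$ classes, say $V_1,\ldots,V_d$ after relabelling, and let $U$ be the union of the other $k-d$ classes. The edges of type $2+\cdots+2$ on $V_1,\ldots,V_d$ plus one vertex of $U$ are $\binom{U}{1}\prod_{i\in[d]}\binom{V_i}{2}$. The edges of type $3+2+\cdots+2$ on these classes are $\bigcup_{t\in[d]}\binom{V_t}{3}\prod_{i\ne t}\binom{V_i}{2}$. Lemma~\ref{lemma5} covers their union exactly with $\sum_j n^{s_j}g(n)^{\lfloor (d-s_j)/2\rfloor}$ blocks, up to a factor $n$ in the odd case.

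A $1+2+\cdots+2$ edge meeting classes $V_{i_1},\ldots,V_{i_d}$ in two vertices and $V_m$ in one would be counted by every $d$-set of classes containing the $2$'s. We avoid this by fixing an ordering of the classes: use the $d$-set of classes holding two vertices, and let $U$ be only the classes after the largest of them, or some similar canonical choice. Each edge is then counted once. This changes the $k$-dependence only slightly, and the resulting deficit is absorbed into $\varepsilon_k$.

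Using $g(n)\le (11/12)n^2+Cn$ from Lemma~\ref{lem:k4-k17} and Proposition~\ref{prop:finite-transfer}, the family for one $d$-set costs at most
\[
\sum_j \left(\tfrac{11}{12}\right)^{\lfloor (d-s_j)/2\rfloor} n^{d}(1+o(1)).
\]
When $d-s_j$ is odd, the bound is $n^{s_j+1}g^{(d-s_j-1)/2}\approx (11/12)^{\lfloor (d-s_j)/2\rfloor}n^{d}$, which agrees. Summing over the $\binom{k}{d}$ $d$-sets gives $\binom{k}{d}n^d\approx \binom{kn}{d}$ to leading order as $k\to\infty$, which yields the main term.

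\textbf{Remaining patterns.} Every other pattern contains a class meeting the edge in at least $3$ vertices beyond the single triple handled above, or has fewer than $d$ pairs. We cover each such pattern crudely by products of Graham--Pollak or trivial decompositions, for instance Alon's bound $f_m(n)\le O(n^{\lfloor m/2\rfloor})$ inside each class, combined with single vertices. The block count is then $O(k^{d-1}n^d)$, or it involves fewer than $d$ chosen classes. Divided by $\binom{kn}{d}\sim k^dn^d/d!$, this is $O(1/k)$ and so goes into $\varepsilon_k$. The cross terms from $U$ (a single vertex versus $U$ holding more vertices) are likewise lower order.

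\textbf{Main obstacle.} The delicate step is the bookkeeping. We must show that the exact covers from Lemma~\ref{lemma5}, taken over all $d$-sets, together with the crude covers of the other patterns, partition $E(K_{kn}^r)$ with no double counting. This holds only if the role of $U$ is assigned canonically, and we must then check that all discrepancies are $O(k^{-1})\binom{kn}{d}$. We would set this up first by the canonical ordering rule, then verify that the count of non-dominant edge types is $O(k^{d-1}n^d)$. Finally, we take the minimum over partitions of $[d]$, which is immediate since the construction works for any fixed partition.
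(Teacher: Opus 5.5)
Your outline matches the paper's: split into $k$ classes, apply Lemma~\ref{lemma5} once per $d$-set of classes, and push the other patterns into error terms. But the step you call the main obstacle is not a real problem, and your fix for it breaks the construction. An edge of type $1+2+\cdots+2$ has exactly $d$ classes containing two of its vertices. So if $U$ is the union of \emph{all} classes outside $S$, as in Lemma~\ref{lemma5}, the edge lies in $\binom{U}{1}\prod_{i\in S}\binom{V_i}{2}$ for exactly one $d$-set $S$, namely its own set of two-vertex classes. A $3+2+\cdots+2$ edge likewise meets exactly $d$ classes. There is no double counting, and since the bound in Lemma~\ref{lemma5} does not depend on $|U|$, taking $U$ as large as possible costs nothing. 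If instead $U$ consists only of the classes after $\max S$, then every $1+2+\cdots+2$ edge whose single vertex lies in a class before $\max S$ is covered by nothing in your scheme. These edges are exactly a $d/(d+1)$ fraction of the dominant pattern, so the deficit is not of relative size $O(1/k)$. Covering them with another product cover for each $S$ costs a further $\Theta\bigl(\binom{k}{d}n^d\bigr)$ blocks, a $k$-independent multiple of the main term, and the stated bound is lost. The first half of your rule (use the $d$-set of two-vertex classes) is all you need; drop the restriction on $U$.

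The same device is what makes your remaining-patterns estimate true, and your sketch does not supply it. Patterns with a single odd part equal to $1$ but fewer than $d$ pairs, such as $1+4+2+\cdots+2$, still meet $d$ classes. If the singleton's class is chosen separately, the product of covers costs on the order of $k^d n^d$ blocks, again a $k$-independent multiple of $\binom{kn}{d}$. You get $O(k^{d-1}n^d)$ only by letting the singleton range over the union of all classes not used by the even parts, so that at most $d-1$ classes are chosen; this is the Leader--Tan argument the paper cites. Also, patterns with at least three odd parts are not $O(k^{d-1}n^d)$ in their $k$-dependence. Instead, for fixed $k$ they cost $O(n^{d-1})$ blocks, since $\sum_j\lfloor r_j/2\rfloor\le d-1$, so they belong in the $(1+o(1))$ factor as $n\to\infty$, not in $\varepsilon_k$. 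With $U$ taken to be the full complement of $S$ and these corrections, your argument coincides with the paper's.
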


\begin{proof}
Partition the vertex set of $K_{kn}^{(r)}$ into $k$ classes
$V_1,\ldots,V_k$, each of size $n$. Classify the $r$-edges according
to their intersection sizes with these vertex classes. For each partition of $
r=r_1+r_2+\cdots+r_\ell$, where $r_1\le r_2\le\cdots\le r_\ell$, and for each choice of distinct vertex classes $V_{i_1},\ldots,V_{i_\ell}$,
consider the family of $r$-edges $e$ satisfying $|e\cap V_{i_j}|=r_j$, for $1\le j\le\ell$.

It follows from the proof of Theorem~1 of
Leader and Tan \cite{leader2018improved} that the set of $r$-edges $e$ for which $|e\cap V_i|$ is odd for at least three distinct vertex classes $V_i$ can be decomposed into $C n^{d-1}$
complete $r$-partite $r$-graphs, and the set of $r$-edges $e$ such that $e$ intersects at most $d-1$ vertex classes and $|e\cap V_i|$ is odd for exactly one $V_i$ can be decomposed into at most $C' k^{d-1} n^d$ complete $r$-partite $r$-graphs for some constants $C,C'$ depending on $d$ and $k$.

Hence, it remains to consider only the edge classes arising from the
partitions $r=1+2+\cdots+2$ and $r=2+2+\cdots+2+3$. Applying Lemma~\ref{lemma5}, for each of $\binom{k}{d}$ collections of $d$ vertex classes, $\{V_{i_1},\ldots,V_{i_d}\}$ can be decomposed into $\sum_{\substack{1\le j\le \ell\\ d-s_j\text{ even}}}
n^{s_j}\,g(n)^{(d-s_j)/2}
+ \sum_{\substack{1\le j\le \ell\\ d-s_j\text{ odd}}}
n^{s_j+1}\,g(n)^{(d-s_j-1)/2}$ complete $r$-partite $r$-graphs where $\sum_{i=1}^{\ell} s_i=d$.


Combining the above with $g(n)\le(11/12)n^2+O(n)$, we have
\begin{equation*}
    \begin{split}
f_r(kn)
&\le
\binom{k}{d}
\Biggl[
\sum_{\substack{1\le j\le \ell\\ d-s_j\text{ even}}}
n^{s_j}\,g(n)^{(d-s_j)/2}
+
\sum_{\substack{1\le j\le \ell\\ d-s_j\text{ odd}}}
n^{s_j+1}\,g(n)^{(d-s_j-1)/2}
\Biggr]\\
&\qquad {}
+C' k^{d-1}n^d+Cn^{d-1}\\
&\le
\binom{k}{d}
\Biggl[
\sum_{\substack{1\le j\le \ell\\ d-s_j\text{ even}}}
\left(\frac{11}{12}\right)^{(d-s_j)/2}
+
\sum_{\substack{1\le j\le \ell\\ d-s_j\text{ odd}}}
\left(\frac{11}{12}\right)^{(d-s_j-1)/2}
\Biggr]n^d\\
&\qquad {}
+C' k^{d-1}n^d+Cn^{d-1}\\
&\le
\binom{k}{d}
\sum_{j=1}^{\ell}
\left(\frac{11}{12}\right)^{\lfloor (d-s_j)/2\rfloor}
n^d
+C' k^{d-1}n^d+o(n^{d})\\
&\le
\Biggl[
\sum_{j=1}^{\ell}
\left(\frac{11}{12}\right)^{\lfloor (d-s_j)/2\rfloor}
+\frac{d!C'}{k}
\Biggr]
\binom{k}{d}n^d+o(n^{d})\\
&\le
\Biggl[
\sum_{j=1}^{\ell}
\left(\frac{11}{12}\right)^{\lfloor (d-s_j)/2\rfloor}
+\varepsilon_k
\Biggr](1+o(1)) \binom{kn}{d}.
\end{split}
\end{equation*}
\end{proof}
\endgroup

\begin{corollary}\label{cor1old}
Let $r\ge65$ be a fixed odd integer. Then there exists $c_r<1$ such that
\[
f_r(n) \leq c_r(1+o(1)) \binom{n}{\lfloor r/2\rfloor}.
\]
\end{corollary}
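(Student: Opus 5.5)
To prove Corollary~\ref{cor1old}, I will apply Theorem~\ref{maintheorem} with an explicit partition of $[d]$ into just two parts. The goal is to make the bracketed sum strictly less than $1$. Then I fix $k$ large enough to absorb $\varepsilon_k$, and finally pass from multiples of $k$ to arbitrary $n$.

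\emph{Step 1 (choice of partition).} Write $r=2d+1$ with $d\ge 32$. Take $\ell=2$ with $|P_1|=s_1=\lfloor d/2\rfloor$ and $|P_2|=s_2=\lceil d/2\rceil$. Then $d-s_1=\lceil d/2\rceil\ge 16$ and $d-s_2=\lfloor d/2\rfloor\ge 16$, so both exponents $\lfloor (d-s_j)/2\rfloor$ are at least $8$. Hence the sum in Theorem~\ref{maintheorem} is at most
\[
2\left(\frac{11}{12}\right)^{8}.
\]
I expect the only delicate point to be the numerical check that this quantity is below $1$. We have $\ln(11/12)\approx -0.0870114$, so $(11/12)^8\approx e^{-0.69609}\approx 0.49851$. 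Therefore $2(11/12)^8\approx 0.99702<1$. This exact computation is why the threshold is $r=65$: for $d=31$, two parts can only give exponents $\lfloor 15/2\rfloor=7$ and $\lfloor 16/2\rfloor=8$, whose sum exceeds $1$. Splitting into more parts makes the sum worse, since a part's contribution grows only slowly as its size shrinks. For $d\ge 32$ the bound $2(11/12)^8$ holds uniformly, which covers every odd $r\ge65$.

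\emph{Step 2 (fixing $k$).} Let $\delta=1-2(11/12)^8>0$. Since $\varepsilon_k\to0$, choose a fixed $k$ with $\varepsilon_k<\delta/2$. Theorem~\ref{maintheorem} then gives
\[
f_r(kn)\le c\,(1+o(1))\binom{kn}{d},\qquad c=1-\delta/2<1 .
\]

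\emph{Step 3 (arbitrary $n$).} First, $f_r$ is monotone in the number of vertices. Restricting a decomposition of $K_N^{r}$ to a vertex subset, by intersecting each part of every block with the subset and discarding blocks that lose a part, gives a decomposition of $K_n^{r}$ with no more blocks. Given $n$, put $N=k\lceil n/k\rceil\le n+k$. Then
\[
f_r(n)\le f_r(N)\le c(1+o(1))\binom{N}{d}=c(1+o(1))\binom{n}{d},
\]
because $k$ and $d$ are fixed and so $\binom{N}{d}/\binom{n}{d}\to1$. Since $d=\lfloor r/2\rfloor$, this yields the corollary with $c_r\le c<1$.
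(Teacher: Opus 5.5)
Your proof is correct and follows essentially the same route as the paper: the balanced split $|P_1|=\lfloor d/2\rfloor$, $|P_2|=\lceil d/2\rceil$ in Theorem~\ref{maintheorem} makes the sum at most $2(11/12)^8\approx 0.997<1$, and $k$ is then fixed large enough to absorb $\varepsilon_k$. Your Step~3, which uses monotonicity of $f_r$ under restriction to a vertex subset to pass from $f_r(kn)$ to arbitrary $n$, correctly supplies a detail the paper leaves implicit.
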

\begin{proof}
Let $r=65$. Since $r=2d+1$, we have $d=32$. Let $P= P_1\cup P_2$ forms a partition of $[d]$ with $|P_1|=|P_2|=16$. Thus, we have
\begin{equation*}
        \begin{split}
            \sum_{j=1}^{l}\left(\frac{11}{12}\right)^{\lfloor (d-s_j)/2\rfloor} &= \left(\frac{11}{12}\right)^{8} + \left(\frac{11}{12}\right)^{8}\\
            &\approx 0.997 < 1
        \end{split}
    \end{equation*}
    Choosing $k$ such that $\Bigl( \sum_{j=1}^{l}\left(\frac{11}{12}\right)^{\lfloor (d-s_j)/2\rfloor} + \varepsilon_k \Bigr) < 1$, we have $f_r(kn) \leq 0.997 \cdot (1+o(1))\binom{kn}{d}$ for all $n$. Similarly for $r \geq 65$, considering the partition $P=P_1 \cup P_2$ of $[d]$ with $|P_1|=\lfloor \frac{d}{2} \rfloor$ and $|P_2|=\lceil \frac{d}{2} \rceil$, we have $c_r < 1$.
\end{proof}

\begin{corollary}
    For $f_r(n) \leq c_r(1+o(1))\binom{n}{\lfloor r/2 \rfloor}$, the constant $c_r$ satisfies
    \[
    c_r \leq
    \left[\left(\frac{12}{11}\right)^{3/4}
    \frac{e\ln(12/11)}{4}\right]r
    \left(\frac{11}{12}\right)^{r/4}+o(1)
    \]
    (Here the $o(1)$ term is as $r \to \infty$).
\end{corollary}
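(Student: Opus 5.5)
The plan is to apply Theorem~\ref{maintheorem} with a partition of $[d]$ into blocks of roughly equal fixed size $s$, and then optimise over $s$. The $(12/11)^{1/2}$ of slack inside the constant $(12/11)^{3/4}$ is meant to absorb both the floor in the exponent and the integrality of $s$. Throughout, write $q=11/12$ and $\lambda=\ln(12/11)$.

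For even $r$, Corollary~\ref{cor:four-uniform} and the product argument of Leader--Mili\'cevi\'c--Tan already give $c_r\le q^{r/4}+o(1)$, which is far below the claimed bound once $r$ is large. So the real work is the odd case $r=2d+1$.

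\textbf{Step 1: choose the partition.} Fix an integer $s$ independent of $r$ and split $[d]$ into $\ell=\lceil d/s\rceil$ parts with $s_j\in\{s-1,s\}$. The leftover part can be merged or split so that at most $O(1)$ parts deviate from size $s$. Theorem~\ref{maintheorem}, after letting $k\to\infty$ so that $\varepsilon_k\to0$, gives
\[
c_r\le \sum_{j=1}^{\ell} q^{\lfloor (d-s_j)/2\rfloor}\le \Bigl(\frac{d}{s}+O(1)\Bigr)\,q^{-1/2}\,q^{(d-s)/2}.
\]
The factor $q^{-1/2}$ is the worst-case loss from the floor, and the sizes $s-1$ versus $s$ change each term by at most a further bounded factor.

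\textbf{Step 2: isolate the error terms.} The $O(1)$ extra parts contribute $O(q^{d/2})=O(q^{r/4})$, which is $o(1)$ as $r\to\infty$. The main term is
\[
\frac{d}{s}\,q^{-s/2}\,q^{d/2}=\frac{d}{s}\,q^{-s/2}\,(12/11)^{1/4}\,q^{r/4},
\qquad d\approx r/2 .
\]

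\textbf{Step 3: optimise over $s$.} Treat $s$ as a real variable $x$ and minimise $\phi(x)=x^{-1}q^{-x/2}=x^{-1}e^{\lambda x/2}$. The minimum is at $x^*=2/\lambda\approx 22.99$, with $\phi(x^*)=e\lambda/2$. Substituting gives
\[
c_r\le \frac{r}{2}\cdot\frac{e\lambda}{2}\cdot(12/11)^{1/4}\cdot(12/11)^{1/2}\cdot q^{r/4}+o(1)
=\Bigl[(12/11)^{3/4}\frac{e\lambda}{4}\Bigr] r\,q^{r/4}+o(1).
\]
This is exactly the claimed bound.

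\textbf{Main obstacle: integrality of $s$ and the parity loss.} The real optimum $x^*$ is not an integer, so taking $s=23$ costs a factor $\phi(23)/\phi(x^*)$. This factor is $1+O(10^{-7})$, but it is not exactly $1$. I would resolve this with a refinement that removes the floor loss entirely: choose every $s_j\equiv d\pmod 2$, using sizes $22$ or $23$ according to the parity of $d$, plus $O(1)$ irregular parts. Then $q^{\lfloor (d-s_j)/2\rfloor}=q^{(d-s_j)/2}$ exactly, and the resulting constant is $(12/11)^{1/4}\,e\lambda/4$ times $\phi(s)/\phi(x^*)\le 1.01$. That is comfortably below the stated $(12/11)^{3/4}e\lambda/4$. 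So the claimed bound follows, with the $o(1)$ absorbing the $O(q^{r/4})$ and $O(1/r)$ corrections coming from $\lceil d/s\rceil$ and from $d=(r-1)/2$. The only care needed is checking that these corrections are genuinely multiplied by $q^{r/4}$, so they are $o(1)$ rather than $O(1)$.
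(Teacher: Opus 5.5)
Your odd-case argument is correct and has the same core as the paper's proof. Both apply Theorem~\ref{maintheorem} with nearly equal parts, optimise as a real variable, and use $(11/12)^{-1/\ln(12/11)}=e$. You optimise the part size $s$ while the paper optimises the number of parts $\ell$ with $s=d/\ell$, which is the same computation.

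There are two differences, and both favour you:
\begin{itemize}
\item The paper bounds every term by $(11/12)^{(d-s_j-1)/2}$. That worst-case floor loss is exactly where its $(12/11)^{3/4}$ comes from.
\item The paper sets $\ell^*=\frac d2\ln\frac{12}{11}$ without noting that neither $\ell^*$ nor $d/\ell^*$ is an integer. The rounding costs a fixed factor $\phi(s)/\phi(x^*)>1$, which the paper never accounts for.
\end{itemize}
Your choice $s_j\equiv d\pmod 2$ fixes both. You use size $22$ or $23$ plus a bounded number of bounded irregular parts; for odd $d$ these also supply the needed odd number of odd parts. This removes the floor loss entirely and gives the sharper constant $(12/11)^{1/4}\frac{e\ln(12/11)}{4}\cdot\phi(s)/\phi(x^*)$ with $\phi(s)/\phi(x^*)<1.001$. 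So you prove the stated bound with room to spare and deal honestly with integrality.

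The one step that does not hold as justified is even $r$. Corollary~\ref{cor:four-uniform} plus the Leader--Mili\'cevi\'c--Tan product argument, as recalled in the paper, gives only $c_r\le 11/12+o(1)$. That is a constant, not an exponentially small bound, so it cannot give the claimed estimate.

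The bound you want is true in the form $c_r\le(11/12)^{\lfloor r/4\rfloor}$, not $(11/12)^{r/4}$: for $r\equiv 2\pmod 4$ one class stays unpaired. Proving it needs its own argument. You would rerun the proof of Theorem~\ref{maintheorem} on the $2+2+\cdots+2$ classes, pair classes via $g(n)$, and check that all other intersection patterns are lower order.

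The paper avoids all this with a one-line reduction. Take an exact cover of $K_{n+1}^{(r+1)}$, restrict it to the edges through a fixed vertex $x$, and delete $x$. This gives $f_r(n)\le f_{r+1}(n+1)$, hence $c_r\le c_{r+1}$ for even $r$. Your odd bound at $r+1$ is at most the claimed bound at $r$ once $r$ is moderately large. Use that route instead.
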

\begin{proof}

Let
$
F(s_1,\ldots,s_\ell)
=
\sum_{j=1}^{\ell}
\left(\frac{11}{12}\right)^{(d-s_j-1)/2},
$
where $s_j\ge 1$ for $1 \leq j \leq \ell$ and $\sum_{j=1}^{\ell} s_j=d$.
Choose a partition $P=P_1 \cup \cdots \cup P_{\ell}$ of $[d]$ with
$|P_1|=\cdots=|P_{\ell}|=d/\ell$. Since
$s_1=\cdots=s_\ell=d/\ell$, the above function becomes
$
F(\ell)
=
\ell\left(\frac{11}{12}\right)^{(d-d/\ell-1)/2}.
$
Differentiating and setting $F'(\ell)=0$, we have
\[
F'(\ell)
=
\left(\frac{11}{12}\right)^{(d-d/\ell-1)/2}
\left(1-\frac{d}{2\ell}\ln \frac{12}{11}\right)
\]
Since $\left(\frac{11}{12}\right)^{(d-d/\ell-1)/2}>0,$ equation $F'(\ell)=0$ is equivalent to
\begin{equation*}
    \begin{split}
         1-\frac{d}{2\ell}\ln \frac{12}{11}=0\\
         \ell=\frac{d}{2}\ln \frac{12}{11}
    \end{split}
\end{equation*}

Differentiating $F'(\ell)$, we have
\[
F''(\ell)
=
\left(\frac{11}{12}\right)^{(d-d/\ell-1)/2}
\frac{d^2}{4\ell^3}\left(\ln\frac{12}{11}\right)^2
\]
Since $F''(\ell)>0$, the equation $F'(\ell)=0$ provides the value of $\ell$ where $F(\ell)$ achieves the minimum. Substituting $\ell^*=\frac{d}{2}\ln\frac{12}{11}$, we have
\[
\begin{split}
F\left(\ell^*\right)
&=\frac{d}{2}\ln\frac{12}{11}
\left(\frac{11}{12}\right)^{\frac{d-1}{2}-\frac{1}{\ln(\frac{12}{11})}}\\
&=\frac{d}{2}\ln\frac{12}{11}
\left(
\frac{11}{12}
\right)^{(d-1)/2}
\left(\frac{11}{12}\right)^{-1/\ln(12/11)}
\end{split}
\]
Simplifying using $\left(\frac{11}{12}\right)^{-1/\ln(12/11)}=e$, we get
\[
F\left(\ell^* \right)
=\frac{ed}{2}\ln\frac{12}{11}
\left(\frac{11}{12}\right)^{(d-1)/2}
\]
Since $r=2d+1$, this gives
\[
F\left(\ell^* \right)
=
\left(\frac{12}{11}\right)^{3/4}
\frac{e\ln(12/11)}{4}(r-1)
\left(\frac{11}{12}\right)^{r/4}.
\]

For even $r$, add a new vertex $x$ to an $n$-vertex set and take an exact cover of $K_{n+1}^{(r+1)}$. Restrict this cover to the edges that contain $x$. After deleting $x$ from each such edge, the restricted blocks give an exact cover of $K_n^{(r)}$. Hence $f_r(n)\le f_{r+1}(n+1)$. Since $\lfloor (r+1)/2\rfloor=\lfloor r/2\rfloor$ and $\binom{n+1}{\lfloor r/2\rfloor}=(1+o(1))\binom{n}{\lfloor r/2\rfloor}$, this implies that $c_r\le c_{r+1}$.

\end{proof}

\section*{Acknowledgements}
Acknowledgement of the assistance of Gurobi ILP and OpenAI Codex for obtaining the construction in Lemma 1. The authors have checked all the proofs and take full responsibility for the correctness of the results presented here.

\appendix
\refstepcounter{section}
\section*{Appendix \thesection}\label{app:product-blocks}
\subsection{Decomposition of $E(K_4)\times E(K_{17})$ into 44 product blocks}
This appendix lists explicitly the product blocks in Lemma~\ref{lem:k4-k17}.
For clarity, use disjoint vertex sets $V(K_4)=\{a,b,c,d\}$ and
$V(K_{17})=\{1,\ldots,17\}$. The letters $a,\ldots,h$ used for
vertices of $K_{17}$ in the proof of the lemma are thus replaced by
$10,\ldots,17$, respectively; the letters in this appendix refer only
to vertices of $K_4$. For disjoint nonempty sets $A,B$, write $K_{A,B}$ for the complete
bipartite graph with parts $A$ and $B$. Row $t$ of
Table~\ref{tab:product-blocks} specifies the block
\[
\mathcal{B}_t=E(K_{A_t,B_t})\times E(K_{C_t,D_t}).
\]
The blocks are pairwise disjoint and satisfy
\[
E(K_4)\times E(K_{17})=\bigcup_{t=1}^{44}\mathcal{B}_t.
\]
Equivalently, they partition the $\binom{4}{2}\binom{17}{2}=816$
four-vertex edges containing two vertices from each vertex set.

The Python source code is available
in our \href{https://github.com/anandbabunb1/Decomposition_product_bicliques}{\faGithub\enspace GitHub repository}.
The code independently checks that the 44 product blocks partition
$E(K_4)\times E(K_{17})$, verifying that each of the 816 edges is covered exactly once.

\begingroup
\footnotesize
\setlength{\tabcolsep}{3pt}
\renewcommand{\arraystretch}{1.15}
\begin{longtable}{@{}rllll@{}}
\caption{The parts defining all 44 product blocks.}\label{tab:product-blocks}\\
\hline
$t$ & $A_t$ & $B_t$ & $C_t$ & $D_t$ \\
\hline
\endfirsthead
\multicolumn{5}{c}{\tablename~\thetable\ (continued)}\\
\hline
$t$ & $A_t$ & $B_t$ & $C_t$ & $D_t$ \\
\hline
\endhead
\hline
\multicolumn{5}{r}{Continued on the next page}\\
\endfoot
\hline
\endlastfoot
1 & $\{a,b\}$ & $\{c,d\}$ & $\{2\}$ & $\{1,4,5,6,10\}$ \\
2 & $\{a,b\}$ & $\{c,d\}$ & $\{7\}$ & $\{3,5,8,15,16\}$ \\
3 & $\{a,b\}$ & $\{c,d\}$ & $\{11\}$ & $\{3,4,9,12,15\}$ \\
4 & $\{a,b\}$ & $\{c,d\}$ & $\{14\}$ & $\{6,9,10,13,16\}$ \\
5 & $\{a,b\}$ & $\{c,d\}$ & $\{3\}$ & $\{4,8\}$ \\
6 & $\{a,b\}$ & $\{c,d\}$ & $\{6\}$ & $\{5,13\}$ \\
7 & $\{a,b\}$ & $\{c,d\}$ & $\{10\}$ & $\{1,9\}$ \\
8 & $\{a,b\}$ & $\{c,d\}$ & $\{15\}$ & $\{12,16\}$ \\
9 & $\{a\}$ & $\{b\}$ & $\{2,3\}$ & $\{1,4,5,6,7,8,10,11\}$ \\
10 & $\{c\}$ & $\{d\}$ & $\{2,3\}$ & $\{1,4,5,6,7,8,10,11\}$ \\
11 & $\{a\}$ & $\{b\}$ & $\{6,7\}$ & $\{5,8,13,16\}$ \\
12 & $\{c\}$ & $\{d\}$ & $\{6,7\}$ & $\{5,8,13,16\}$ \\
13 & $\{a\}$ & $\{b\}$ & $\{10,11\}$ & $\{1,4,9,12\}$ \\
14 & $\{c\}$ & $\{d\}$ & $\{10,11\}$ & $\{1,4,9,12\}$ \\
15 & $\{a\}$ & $\{b\}$ & $\{14,15\}$ & $\{6,7,9,10,11,12,13,16\}$ \\
16 & $\{c\}$ & $\{d\}$ & $\{14,15\}$ & $\{6,7,9,10,11,12,13,16\}$ \\
17 & $\{a,c\}$ & $\{b,d\}$ & $\{2\}$ & $\{3,13,17\}$ \\
18 & $\{a,c\}$ & $\{b,d\}$ & $\{5\}$ & $\{8,13,15,16\}$ \\
19 & $\{a,c\}$ & $\{b,d\}$ & $\{1,9,10\}$ & $\{5,6,7,8,13,17\}$ \\
20 & $\{a,c\}$ & $\{b,d\}$ & $\{1,2,3,4,12\}$ & $\{9\}$ \\
21 & $\{a,c\}$ & $\{b,d\}$ & $\{5,6,13\}$ & $\{11,12,17\}$ \\
22 & $\{a,c\}$ & $\{b,d\}$ & $\{1,2,5,15,17\}$ & $\{14\}$ \\
23 & $\{a\}$ & $\{c\}$ & $\{1,2\}$ & $\{3,7,8,9,11,17\}$ \\
24 & $\{b\}$ & $\{d\}$ & $\{1,2\}$ & $\{3,7,8,9,11,17\}$ \\
25 & $\{a\}$ & $\{c\}$ & $\{5,6\}$ & $\{1,3,8,11,12,15,16,17\}$ \\
26 & $\{b\}$ & $\{d\}$ & $\{5,6\}$ & $\{1,3,8,11,12,15,16,17\}$ \\
27 & $\{a\}$ & $\{c\}$ & $\{9,10\}$ & $\{3,4,5,6,7,8,12,13,15,17\}$ \\
28 & $\{b\}$ & $\{d\}$ & $\{9,10\}$ & $\{3,4,5,6,7,8,12,13,15,17\}$ \\
29 & $\{a\}$ & $\{c\}$ & $\{13,14\}$ & $\{1,2,5,7,11,12,15,17\}$ \\
30 & $\{b\}$ & $\{d\}$ & $\{13,14\}$ & $\{1,2,5,7,11,12,15,17\}$ \\
31 & $\{a,d\}$ & $\{b,c\}$ & $\{4,7\}$ & $\{6,12,17\}$ \\
32 & $\{a,d\}$ & $\{b,c\}$ & $\{3,8\}$ & $\{13,14,16,17\}$ \\
33 & $\{a,d\}$ & $\{b,c\}$ & $\{1,5,7,8,13,14,15,16\}$ & $\{4\}$ \\
34 & $\{a,d\}$ & $\{b,c\}$ & $\{1,2,3,8,17\}$ & $\{12,15\}$ \\
35 & $\{a,d\}$ & $\{b,c\}$ & $\{1,2,9,10,12,13,17\}$ & $\{16\}$ \\
36 & $\{a,d\}$ & $\{b,c\}$ & $\{7,8,10,16,17\}$ & $\{11\}$ \\
37 & $\{a\}$ & $\{d\}$ & $\{3,4\}$ & $\{1,5,6,10,12,13,14,17\}$ \\
38 & $\{b\}$ & $\{c\}$ & $\{3,4\}$ & $\{1,5,6,10,12,13,14,17\}$ \\
39 & $\{a\}$ & $\{d\}$ & $\{7,8\}$ & $\{2,4,6,11,12,13,14,17\}$ \\
40 & $\{b\}$ & $\{c\}$ & $\{7,8\}$ & $\{2,4,6,11,12,13,14,17\}$ \\
41 & $\{a\}$ & $\{d\}$ & $\{11,12\}$ & $\{1,2,10,14,16,17\}$ \\
42 & $\{b\}$ & $\{c\}$ & $\{11,12\}$ & $\{1,2,10,14,16,17\}$ \\
43 & $\{a\}$ & $\{d\}$ & $\{15,16\}$ & $\{1,2,3,4,6,8,9,10,13,17\}$ \\
44 & $\{b\}$ & $\{c\}$ & $\{15,16\}$ & $\{1,2,3,4,6,8,9,10,13,17\}$ \\
\end{longtable}
\endgroup

\bibliographystyle{plainurl}
\bibliography{references}

\end{document}